\renewcommand{\PRIMAL}{{\mathcal U}}
\renewcommand{\DUAL}{{\mathcal V}}
\renewcommand{\UNCERTAIN}{{\mathcal{X}}}
\renewcommand{\primal}{u}
\renewcommand{\dual}{v}
\renewcommand{\uncertain}{x}
\renewcommand{\Lagrangian}{L}
\newcommand{\Rockafellian}{R} 
\newcommand{\PartialFunction}[2]{#1\np{#2,\cdot}} 
\title{Duality Between Lagrangians and Rockafellians}
\author{Michel De Lara \\ CERMICS, \'Ecole des Ponts, Marne-la-Vall\'ee, France}
\begin{document}

\maketitle

\begin{abstract}
  In his monograph \emph{Conjugate Duality and Optimization}, Rockafellar
  puts forward a ``perturbation + duality'' method to obtain a dual problem for an
  original minimization problem.
  First, one embeds the minimization problem into a family of perturbed problems (thus
  giving a so-called perturbation function); 
  the perturbation of the original function to be minimized has recently been called a
  Rockafellian.
  Second, when the perturbation variable belongs to a primal vector space paired, by a bilinear
  form, with a dual vector space, one builds a Lagrangian from a Rockafellian;
  one also obtains a so-called dual function (and a dual problem).
  The method has been extended from Fenchel duality to generalized convexity:
  when the perturbation belongs to a primal set paired, by a coupling
  function, with a dual set, one also builds a Rockafellian from a
  Lagrangian.
  Following these paths, we highlight a duality between Lagrangians and Rockafellians.
  Where the material mentioned above mostly focuses on moving from Rockafellian to
  Lagrangian, we treat them equally and display formulas that go both ways.
  We propose a definition of Lagrangian-Rockafellian couples.
  We characterize these latter as dual functions, with respect to a coupling,
  and also in terms of generalized convex functions.
    The duality between perturbation and dual functions is not as clear cut.
\end{abstract}

\section{Introduction}

In \cite{Rockafellar:1974}, Rockafellar developed in a systematic way 
the ``perturbation + duality'' method (see also
\cite{Laurent:1972,Joly-Laurent:1971,Ekeland-Temam:1999}). 
First, one embeds a minimization problem into a family of perturbed problems,
thus giving a so-called perturbation function;
  the bivariate perturbation of the original function to be minimized has recently been called a
  \emph{Rockafellian} \cite[Chapter~5.A]{Royset-Wets:2021}. 
  Second, when the perturbation variable belongs to a primal vector space paired, by a bilinear
  form, with a dual vector space,
  Rockafellar showed how one can build a \emph{Lagrangian} from a Rockafellian;
  one also obtains a so-called dual function (and a dual problem).

  A large part of the theory has been developed in the convex bifunction case
  \cite[Sect.~29]{Rockafellar:1974}, that is, when the Rockafellian is jointly
  convex in both variables, namely decision and perturbation, or at least when
the decision set is a linear space
    (see \cite[Sect.~30]{Rockafellar:1974},
    \cite[Chap.~11.H-I]{Rockafellar-Wets:1998},
    \cite[Chapter~5]{Royset-Wets:2021}).
This has much to do with obtaining strong duality results, for which convex
analysis offers powerful tools.
In the nonconvex case, it seems that the extension to generalized
convexity has begun with \cite{Balder:1977} that makes use of 
couplings and conjugacies (see also \cite{Singer:2006}, \cite[Sect.~3]{Martinez-Legaz:2005} and
references therein). 

  In this paper, we simply want to stress a duality between
  Lagrangians and Rockafellians, in the classic Fenchel bilinear pairing case 
  and also in the more general coupling case.
Our contribution is modest and formal.
    It is modest in that most of the material can be traced back to
    \cite{Rockafellar:1974} and then \cite{Balder:1977} --- and that we follow and slightly extend their paths --- 
and also as we do not focus on strong duality.
  It is formal in that we treat Lagrangians and Rockafellians in a symmetric fashion, and highlight a duality between them
  (Theorem~\ref{th:Lagrangian-Rockafellian_Lagrangian}).
  So, there is no real novelty in the paper, but for a symmetric
  examination of two-way relationships between Lagrangians and Rockafellians
when convexity is not assumed.

  The paper is organized as follows.
  In Sect.~\ref{Lagrangians_and_Rockafellians:_bilinear_pairing_case},
  we consider the classic Fenchel bilinear pairing case.
    We revisit some of the
results in \cite{Rockafellar:1974} and try to reformulate the
``perturbation + duality'' method with as little convexity as possible in the assumptions.
We recall and sketch how one can build a Lagrangian from a Rockafellian,
and we highlight a converse construction. 
In Sect.~\ref{Lagrangians_and_Rockafellians:_general_coupling_case},
we recall how one can build a Lagrangian from a Rockafellian, in the  case where 
the perturbation belongs to a primal set paired, by a coupling
function, with a dual set.
We also propose a converse construction, from Lagrangian to Rockafellian.
Finally, we propose a notion of Lagrangian-Rockafellian couple, and
we formally express duality between Lagrangians and Rockafellians.

\section{Lagrangians and Rockafellians: bilinear pairing case}
\label{Lagrangians_and_Rockafellians:_bilinear_pairing_case}

In~\S\ref{The_bilinear_pairing_case}, we provide background on the classic
Fenchel conjugacy.
In~\S\ref{From_Rockafellians_to_Lagrangians:_bilinear_pairing},
we sketch, in one table, how one can build a Lagrangian from a Rockafellian as 
developed in \cite{Rockafellar:1974}.
In~\S\ref{From_Lagrangians_to_Rockafellians:_bilinear_pairing},
we sketch, in one table, how one can build a Rockafellian from a Lagrangian.

\subsection{The bilinear pairing case}
\label{The_bilinear_pairing_case}

We consider two real vector spaces $\PRIMAL$ and $\DUAL$ paired, in the sense of
convex analysis, by a bilinear form 
\( \proscal{\cdot}{\cdot} : \PRIMAL \times \DUAL \to \RR \). 
The classic Fenchel conjugacy~$\star$ is defined, 
for any functions \( \fonctionprimal : \PRIMAL  \to \barRR \)
and \( \fonctiondual : \DUAL \to \barRR \), by\footnote{%
  In convex analysis, one does not use the notation~\( \LFMr{} \),
  but simply~\( \LFM{} \). We use~\( \LFMr{} \)
  to be consistent with the notation~\eqref{eq:Fenchel-Moreau_reverse_conjugate}
  for general conjugacies.
 Also the \( +\bp{- \cdots} \) expression is here to stress the proximity with the
\( \LowPlus \bp{ - \cdots} \) expression in
  Equations~\eqref{eq:Fenchel-Moreau_conjugate_all}.}
\begin{subequations}
  \begin{align}
    \LFM{\fonctionprimal}\np{\dual} 
    &= 
      \sup_{\primal \in \PRIMAL} \Bp{ \proscal{\primal}{\dual} 
      + \bp{ -\fonctionprimal\np{\primal} } } 
      \eqsepv \forall \dual \in \DUAL
      \eqfinv
      \label{eq:Fenchel_conjugate}
    \\
    \LFMr{\fonctiondual}\np{\primal} 
    &= 
      \sup_{ \dual \in \DUAL } \Bp{ \proscal{\primal}{\dual} 
      + \bp{ -\fonctiondual\np{\dual} } } 
      \eqsepv \forall \primal \in \PRIMAL
      \eqfinv
      \label{eq:Fenchel_conjugate_reverse}
    \\
    \LFMbi{\fonctionprimal}\np{\primal} 
    &= 
      \sup_{\dual \in \DUAL} \Bp{ \proscal{\primal}{\dual} 
      + \bp{ -\LFM{\fonctionprimal}\np{\dual} } } 
      \eqsepv \forall \primal \in \PRIMAL
      \eqfinp
      \label{eq:Fenchel_biconjugate}
    \\
      \LFMrbi{\fonctiondual}\np{\dual} 
    &=
      \sup_{\primal \in \PRIMAL} \Bp{ \proscal{\primal}{\dual} 
      + \bp{ -\LFMr{\fonctiondual}\np{\primal} } } 
      \eqsepv \forall \dual \in \DUAL
      \eqfinv
      \label{eq:Fenchel_biconjugate_reverse}
 \end{align}
\end{subequations}
A function \( \fonctionprimal : \PRIMAL  \to \barRR \)
is said to be
\( \Fenchelcoupling \)-convex if
\( \LFMbi{\fonctionprimal}=\fonctionprimal \) and a function
\( \fonctiondual : \DUAL \to \barRR \) is said to be
\( \Fenchelcoupling' \)-convex if
\( \LFMrbi{\fonctiondual}=\fonctiondual \).
In both cases, this is equivalent to the function being
either one of the two constant functions~$-\infty$ and~$+\infty$
or proper convex lsc (lower semi continuous) \cite[Corollary~12.2.1]{Rockafellar:1970}.
%

\subsection{From Rockafellians to Lagrangians: bilinear pairing}
\label{From_Rockafellians_to_Lagrangians:_bilinear_pairing}

We sketch below the ``perturbation + duality'' method developed in
\cite{Rockafellar:1974}.
More precisely, we follow \cite[Sect.~4]{Rockafellar:1974}, but with less structure.
Indeed, in \cite[Sect.~4]{Rockafellar:1974} it is supposed that 
the decision set~$\UNCERTAIN$ below is a linear space, in duality with another linear space.
However, for this \S\ref{From_Rockafellians_to_Lagrangians:_bilinear_pairing},
and for the whole paper, we do not require any assumption on the decision
set~$\UNCERTAIN$.

\begin{description}
  \item[Original minimization problem.]
    Suppose given a nonempty set $\UNCERTAIN$, a function
 \( \fonctionuncertain : \UNCERTAIN \to \barRR \) and consider the 
    \emph{original minimization problem}
    \( \inf_{\uncertain \in \UNCERTAIN}\fonctionuncertain\np{\uncertain} \).
 \item[Perturbation scheme, Rockafellian.]
Let be given a vector space~$\PRIMAL$ 
   and a function 
\( \Rockafellian : \UNCERTAIN \times \PRIMAL \to \barRR \)
such that
\( \fonctionuncertain\np{\uncertain} =
\Rockafellian\np{\uncertain,0} \),  
\( \forall \uncertain \in \UNCERTAIN \).
The variables in~$\PRIMAL$ are called perturbations, and
the function~$\Rockafellian$ is called a \emph{Rockafellian} \cite[Chapter~5]{Royset-Wets:2021}
(a \emph{dualizing parameterization} in
\cite[Definition~11.45]{Rockafellar-Wets:1998},
but with the requirement that \( \Rockafellian\np{\uncertain,\primal} \) be convex
lsc in the perturbation variable~$\primal$). 
\item[Perturbation function.]
  The original minimization problem is now embedded in a family given by the
  \emph{perturbation function} \cite[p.~295]{Rockafellar:1970}
  (called \emph{min-value function} in \cite[p.~264]{Royset-Wets:2021}) \( \varphi : \PRIMAL \to \barRR \) defined by 
  \( \varphi\np{\primal} = \inf_{\uncertain \in \UNCERTAIN}
    \Rockafellian\np{\uncertain,\primal} \), 
  \( \forall  \primal \in \PRIMAL \).
  The original minimization problem corresponds to
  \( \varphi\np{0} =
\inf_{\uncertain \in \UNCERTAIN}
\Rockafellian\np{\uncertain,0}
= 
\inf_{\uncertain \in \UNCERTAIN}\fonctionuncertain\np{\uncertain} \).
\item[Dual problem, dual function, Lagrangian.]
  Suppose that there exists a vector space~$\DUAL$ such that
$\PRIMAL$ and $\DUAL$ are paired by a bilinear form 
\( \proscal{\cdot}{\cdot} : \PRIMAL \times \DUAL \to \RR \).
Then, we obtain a \emph{dual problem} by
\( 
   \sup_{\dual \in \DUAL} \bp{ -\LFM{\varphi}\np{\dual} } =
   \LFMbi{\varphi}\np{0} \leq \varphi\np{0}
   =\inf_{\uncertain \in \UNCERTAIN}\fonctionuncertain\np{\uncertain} 
   \). 
   In the dual problem appears the so-called \emph{dual function}
   \( \psi =-\LFM{\varphi} : \DUAL \to \barRR \), also given by
    \(
    \psi\np{\dual}=\inf_{\uncertain\in\UNCERTAIN}\Lagrangian\np{\uncertain,\dual} 
    \), \( \forall \dual \in \DUAL \), where the \emph{Lagrangian}
    \( \Lagrangian : \UNCERTAIN\times\DUAL \to \barRR \)
    is defined by\footnote{%
The expression of the Lagrangian is given in
\cite[Equation~(4.2)]{Rockafellar:1974} but with 
$+\FenchelCoupling{\primal}{\dual}$
instead of $-\FenchelCoupling{\primal}{\dual}$.
The expression we adopt is the one in
\cite[Equation~11(16)]{Rockafellar-Wets:1998},
but without requiring that \( \Rockafellian\np{\uncertain,\primal} \) be convex
lsc in the perturbation variable~$\primal$.}
\( \Lagrangian\np{\uncertain,\dual} =
             \inf_{ \primal \in \PRIMAL } \ba{ \Rockafellian\np{\uncertain,\primal} 
               -\FenchelCoupling{\primal}{\dual} } \),
             \( \forall \np{\uncertain,\dual} \in \UNCERTAIN\times\DUAL \).
\end{description}
Thus, 
in the classical setting of~\cite{Rockafellar:1974},
one can build a Lagrangian from a Rockafellian
as summarized and sketched in Table~\ref{tab:From_Rockafellians_to_Lagrangians:_bilinear_pairing}.
We do not provide a proof of the properties stated in
Table~\ref{tab:From_Rockafellians_to_Lagrangians:_bilinear_pairing},
as these properties are well-known~\cite{Rockafellar:1974}
(and as their proof will be a specialization of the proof of
Proposition~\ref{pr:From_Rockafellians_to_Lagrangians:_general_coupling}
in~\S\ref{From_Rockafellians_to_Lagrangians:_general_coupling}
in the special case of the Fenchel bilinear pairing). 

For any bivariate functions \( \Lagrangian : \UNCERTAIN\times\DUAL \to \barRR \)
and \( \Rockafellian : \UNCERTAIN\times\PRIMAL \to \barRR \),
we denote, for all \( \uncertain\in\UNCERTAIN\),
by \( \PartialFunction{\Lagrangian}{\uncertain} : \DUAL \to \barRR \)
and \( \PartialFunction{\Rockafellian}{\uncertain} : \PRIMAL \to \barRR \)
the partial functions obtained by ``freezing''~$\uncertain$.
In \cite[Sect.~29]{Rockafellar:1970}, 
Rockafellar uses the vocable of \emph{bifunction} and the notation
\( \PartialFunction{\Lagrangian}{\uncertain}=\np{\Lagrangian\uncertain} \), 
 \( \PartialFunction{\Rockafellian}{\uncertain}=\np{\Rockafellian\uncertain}
 \). 

\begin{table}[hbtp]
  \centering
  \begin{tabular}{||c||c|c|c|c||}  
    \hline\hline 
sets & optimization 
    & primal 
      & pairing 
    & dual 
    \\
  &  set~$\UNCERTAIN $ & space $\PRIMAL$ &
 \( \PRIMAL \overset{\FenchelCoupling{\cdot}{\cdot}}{\leftrightarrow} \DUAL \)  & space $\DUAL$
    \\ \hline
    variables &   decision  & perturbation &
                                                       \(
                                             \FenchelCoupling{\primal}{\dual} \)  & sensitivity
    \\
 &   \( \uncertain\in\UNCERTAIN \) & \( \primal\in\PRIMAL \) & \(  \in\RR \) &
                                                                                 \(
                                                                                 \dual\in\DUAL \)
     \\ \hline\hline
    bivariate && Rockafellian & & Lagrangian
    \\
    functions  && \( \Rockafellian : \UNCERTAIN\times\PRIMAL \to \barRR \) &
    & \( \Lagrangian : \UNCERTAIN\times\DUAL \to \barRR \)
    \\ \hline
    definition && & & \( \Lagrangian\np{\uncertain,\dual} = \)
    \\
    && & & \( 
\inf_{ \primal \in \PRIMAL } \ba{ \Rockafellian\np{\uncertain,\primal} 
          -\FenchelCoupling{\primal}{\dual} } \)
    \\ \hline
 property  && & &
          \( -\PartialFunction{\Lagrangian}{\uncertain}
          = \SFM{\bp{\PartialFunction{\Rockafellian}{\uncertain}}}{\Fenchelcoupling} \)
    \\ \hline
 property  && & &
          \( -\PartialFunction{\Lagrangian}{\uncertain} \) is \( \Fenchelcoupling'
                  \)-convex 
    \\ 
   && & & (hence \(\PartialFunction{\Lagrangian}{\uncertain} \) is concave usc)
    \\ \hline\hline
    univariate 
    && perturbation function & & dual function
    \\
functions  && \( \varphi : \PRIMAL \to \barRR \) & & \( \psi : \DUAL \to \barRR \)
          \\ \hline
    definition 
    && \(
      \varphi\np{\primal}=\inf_{\uncertain\in\UNCERTAIN}\Rockafellian\np{\uncertain,\primal}
      \) & &
          \(
      \psi\np{\dual}=\inf_{\uncertain\in\UNCERTAIN}\Lagrangian\np{\uncertain,\dual}
             \)
    \\ \hline
 property 
    && & & \( -\psi = \SFM{\varphi}{\Fenchelcoupling} \)
             \\
    \hline\hline
  \end{tabular}
  \caption{From Rockafellians to Lagrangians: bilinear pairing
   \label{tab:From_Rockafellians_to_Lagrangians:_bilinear_pairing}}
\end{table}

\subsection{From Lagrangians to Rockafellians: bilinear pairing}
\label{From_Lagrangians_to_Rockafellians:_bilinear_pairing}

Conversely, one can build a Rockafellian from a Lagrangian
as sketched in Table~\ref{tab:From_Lagrangians_to_Rockafellians:_bilinear_pairing}.
This seems to be less classical than the opposite construction recalled in~\S\ref{From_Rockafellians_to_Lagrangians:_bilinear_pairing}, 
although the formula \( \Rockafellian\np{\uncertain,\primal} =
\sup_{  \dual \in \DUAL } \ba{ \Lagrangian\np{\uncertain,\dual} 
          + \FenchelCoupling{\primal}{\dual} } \)
        appears in \cite[Equation~11(17)]{Rockafellar-Wets:1998},
        as a consequence of \cite[Equation~11(16)]{Rockafellar-Wets:1998}
        and of the requirement that \( \Rockafellian\np{\uncertain,\primal} \) be convex
        lsc in the perturbation variable~$\primal$.
        Note also that \cite[Chapter~33]{Rockafellar:1970} establishes a relationship between the class
of concave-convex saddle functions (Lagrangians of convex programs being a
subclass) and the class of convex Rockafellians via such formula
(see especially \cite[Corollary~33.12]{Rockafellar:1970}).

The properties stated in
Table~\ref{tab:From_Lagrangians_to_Rockafellians:_bilinear_pairing}
will be a consequence of the results of
Proposition~\ref{pr:From_Lagrangians_to_Rockafellians:_general_coupling}
in~\S\ref{From_Lagrangians_to_Rockafellians:_general_coupling},
in the special case of the Fenchel bilinear pairing. 
\begin{table}[hbtp]
  \centering
  \begin{tabular}{||c||c|c|c|c||}  
    \hline\hline 
sets & optimization 
    & primal 
      & pairing 
    & dual 
    \\
  &  set~$\UNCERTAIN $ & space $\PRIMAL$ &
 \( \PRIMAL \overset{\FenchelCoupling{\cdot}{\cdot}}{\leftrightarrow} \DUAL \)  & space $\DUAL$
    \\ \hline
    variables &   decision  & perturbation &
                                                       \(
                                             \FenchelCoupling{\primal}{\dual} \)  & sensitivity
    \\
 &   \( \uncertain\in\UNCERTAIN \) & \( \primal\in\PRIMAL \) & \(  \in\RR \) &
                                                                                 \(
                                                                                 \dual\in\DUAL \)
     \\ \hline\hline
    bivariate && Rockafellian & & Lagrangian
    \\
    functions  && \( \Rockafellian : \UNCERTAIN\times\PRIMAL \to \barRR \) &
    & \( \Lagrangian : \UNCERTAIN\times\DUAL \to \barRR \)
    \\ \hline
    definition && \( \Rockafellian\np{\uncertain,\primal} = \) & & 
    \\
    && \( 
\sup_{  \dual \in \DUAL } \ba{ \Lagrangian\np{\uncertain,\dual} 
          + \FenchelCoupling{\primal}{\dual} } \) & & 
    \\ \hline
 property  && \( \PartialFunction{\Rockafellian}{\uncertain}
          =
              \SFMr{\bp{-\PartialFunction{\Lagrangian}{\uncertain}}}{\Fenchelcoupling} \) & &
    \\ \hline
 property  && \( \PartialFunction{\Rockafellian}{\uncertain} \) is \( \Fenchelcoupling
                  \)-convex & &
    \\ 
   && (hence \(\PartialFunction{\Rockafellian}{\uncertain} \) is convex lsc) & & 
    \\ \hline\hline
    univariate 
    && perturbation function & & dual function
    \\
functions  && \( \varphi : \PRIMAL \to \barRR \) & & \( \psi : \DUAL \to \barRR \)
          \\ \hline
    definition 
    && \(
      \varphi\np{\primal}=\inf_{\uncertain\in\UNCERTAIN}\Rockafellian\np{\uncertain,\primal}
      \) & &
          \(
      \psi\np{\dual}=\inf_{\uncertain\in\UNCERTAIN}\Lagrangian\np{\uncertain,\dual}
             \)
    \\ \hline
 property 
    && \( \varphi \geq \SFMr{\np{-\psi}}{\Fenchelcoupling} \) & & 
             \\
    \hline\hline
  \end{tabular}
  \caption{From Lagrangians to Rockafellians: bilinear pairing}
   \label{tab:From_Lagrangians_to_Rockafellians:_bilinear_pairing}
\end{table}

\section{Lagrangians and Rockafellians: general coupling case}
\label{Lagrangians_and_Rockafellians:_general_coupling_case}

In~\S\ref{The_general_coupling_case}, we provide background on couplings
and Fenchel-Moreau conjugacies.
In~\S\ref{From_Rockafellians_to_Lagrangians:_general_coupling}, 
we recall and sketch, in one table, how one can build a Lagrangian from a Rockafellian  \cite{Balder:1977}.
In~\S\ref{From_Lagrangians_to_Rockafellians:_general_coupling}, 
we sketch, in one table, how one can build a Rockafellian from a Lagrangian.
Finally, in~\S\ref{Duality_between_Lagrangians_and_Rockafellians},
we propose a notion of Lagrangian-Rockafellian couple, and
we formally express duality between Lagrangians and Rockafellians.

\subsection{The general coupling case}
\label{The_general_coupling_case}

When we manipulate functions with values 
in~$\barRR = [-\infty,+\infty] $,
we adopt the Moreau \emph{lower addition} \cite{Moreau:1963a,Moreau:1963b} 
that extends the usual addition with 
\( \np{+\infty} \LowPlus \np{-\infty} = \np{-\infty} \LowPlus \np{+\infty} =
-\infty \), and the Moreau \emph{upper addition} 
that extends the usual addition with 
\( \np{+\infty} \UppPlus \np{-\infty} = \np{-\infty} \UppPlus \np{+\infty} =
+\infty \). 

For background on coupling and conjugacies, we refer the reader to
\cite{Singer:1997,Rubinov:2000,Martinez-Legaz:2005}.
We consider two sets $\PRIMAL$ and $\DUAL$ paired by a \emph{coupling}
\( \coupling : \PRIMAL \times \DUAL \to \barRR \).
Then, one associates \emph{conjugacies} 
from the set \( \barRR^\PRIMAL \) of functions \( \PRIMAL  \to \barRR \)
to the set \( \barRR^\DUAL \)  of functions \( \DUAL  \to \barRR \),
and from \( \barRR^\DUAL \) to \( \barRR^\PRIMAL \) 
as follows.
\begin{subequations}
    The \emph{$\coupling$-Fenchel-Moreau conjugate} of a 
    function \( \fonctionprimal : \PRIMAL  \to \barRR \)
    is the function \( \SFM{\fonctionprimal}{\coupling} : \DUAL  \to \barRR \) 
    defined by
    \begin{equation}
      \SFM{\fonctionprimal}{\coupling}\np{\dual} = 
      \sup_{\primal \in \PRIMAL} \Bp{ \coupling\np{\primal,\dual} 
        \LowPlus \bp{ -\fonctionprimal\np{\primal} } } 
      \eqsepv \forall \dual \in \DUAL
      \eqfinp
      \label{eq:Fenchel-Moreau_conjugate}
    \end{equation}
    With the coupling $\coupling$, we associate 
    the \emph{reverse coupling~$\coupling'$} defined by 
    \begin{equation}
      \coupling': \DUAL \times \PRIMAL \to \barRR 
      \eqsepv
      \coupling'\np{\dual,\primal}= \coupling\np{\primal,\dual} 
      \eqsepv
      \forall \np{\dual,\primal} \in \DUAL \times \PRIMAL
      \eqfinp
      \label{eq:reverse_coupling}
    \end{equation}
    The \emph{$\coupling'$-Fenchel-Moreau conjugate} of a 
    function \( \fonctiondual : \DUAL \to \barRR \) is
    the function \( \SFM{\fonctiondual}{\coupling'} : \PRIMAL \to \barRR \) 
    defined by
    \begin{equation}
      \SFM{\fonctiondual}{\coupling'}\np{\primal} = 
      \sup_{ \dual \in \DUAL } \Bp{ \coupling'\np{\dual,\primal} 
        \LowPlus \bp{ -\fonctiondual\np{\dual} } } = 
      \sup_{ \dual \in \DUAL } \Bp{ \coupling\np{\primal,\dual} 
        \LowPlus \bp{ -\fonctiondual\np{\dual} } } 
      \eqsepv \forall \primal \in \PRIMAL 
      \eqfinp
      \label{eq:Fenchel-Moreau_reverse_conjugate}
    \end{equation}
    The \emph{$\coupling$-Fenchel-Moreau biconjugate} of a 
    function \( \fonctionprimal : \PRIMAL  \to \barRR \) is
    the function \( \SFMbi{\fonctionprimal}{\coupling} : \PRIMAL \to \barRR \) 
    defined by
    \begin{equation}
      \SFMbi{\fonctionprimal}{\coupling}\np{\primal} = 
      \bp{\SFM{\fonctionprimal}{\coupling}}^{\coupling'} \np{\primal} = 
      \sup_{ \dual \in \DUAL } \Bp{ \coupling\np{\primal,\dual} 
        \LowPlus \bp{ -\SFM{\fonctionprimal}{\coupling}\np{\dual} } } 
      \eqsepv \forall \primal \in \PRIMAL 
      \eqfinp
      \label{eq:Fenchel-Moreau_biconjugate}
    \end{equation}
   The \emph{$\coupling'$-Fenchel-Moreau biconjugate} of a 
    function \( \fonctiondual : \DUAL \to \barRR \) is
    the function \( \SFMrbi{\fonctiondual}{\coupling} : \DUAL \to \barRR \)
    defined by
    \begin{equation}
      \SFMrbi{\fonctiondual}{\coupling} \np{\dual} =
      \bp{\SFM{\fonctiondual}{\coupling'}}^{\coupling} \np{\dual} =
     \sup_{\primal \in \PRIMAL} \Bp{ \coupling\np{\primal,\dual} 
        \LowPlus \bp{ - \SFM{\fonctiondual}{\coupling'}\np{\primal} }}
      \eqsepv \forall \dual \in \DUAL
      \eqfinp
      \label{eq:Fenchel-Moreau_reverse_biconjugate}
    \end{equation}
         \label{eq:Fenchel-Moreau_conjugate_all}
  \end{subequations}
  A function \( \fonctionprimal : \PRIMAL  \to \barRR \)
is said to be
\( \coupling \)-convex if
\( \SFMbi{\fonctionprimal}{\coupling} =\fonctionprimal \)
(which is equivalent to \( \fonctionprimal =\SFM{\fonctiondual}{\coupling'} \)
for some \( \fonctiondual : \DUAL \to \barRR \)).
A function
\( \fonctiondual : \DUAL \to \barRR \) is said to be
\( \coupling' \)-convex if
\( \SFMrbi{\fonctiondual}{\coupling} =\fonctiondual \)
(which is equivalent to \( \fonctiondual = \SFM{\fonctionprimal}{\coupling} \)
for some \( \fonctionprimal : \PRIMAL  \to \barRR \)).
%

\subsection{From Rockafellians to Lagrangians: general coupling}
\label{From_Rockafellians_to_Lagrangians:_general_coupling}

Following \cite{Balder:1977} --- and taking inspiration from
\S\ref{From_Rockafellians_to_Lagrangians:_bilinear_pairing}
(see also \cite[Sect.~3]{Martinez-Legaz:2005}) --- 
one can build a Lagrangian from a Rockafellian
as sketched in
Table~\ref{tab:From_Rockafellians_to_Lagrangians:_general_coupling}.
The formal statement is given in
Proposition~\ref{pr:From_Rockafellians_to_Lagrangians:_general_coupling}.
The result is not new \cite[Sect.~3]{Martinez-Legaz:2005}, but we give a proof
for the sake of completeness and symmetry in the exposition. 

\begin{table}[hbtp]
  \centering
  \begin{tabular}{||c||c|c|c|c||}  
    \hline\hline 
sets & optimization 
    & primal 
      & coupling 
    & dual 
    \\
  &  set~$\UNCERTAIN $ & set $\PRIMAL$ &
 \( \PRIMAL \overset{\coupling}{\leftrightarrow} \DUAL \)  & set~$\DUAL$
    \\ \hline
    variables &   decision  & perturbation &
                                                       \(
                                             \Coupling{\primal}{\dual} \)  & sensitivity
    \\
 &   \( \uncertain\in\UNCERTAIN \) & \( \primal\in\PRIMAL \) & \(  \in\barRR \) &
                                                                                 \(
                                                                                 \dual\in\DUAL \)
     \\ \hline\hline
    bivariate && Rockafellian & & Lagrangian
    \\
    functions  && \( \Rockafellian : \UNCERTAIN\times\PRIMAL \to \barRR \) &
    & \( \Lagrangian : \UNCERTAIN\times\DUAL \to \barRR \)
    \\ \hline
    definition && & & \( \Lagrangian\np{\uncertain,\dual} = \)
    \\
    && & & \( 
\inf_{ \primal \in \PRIMAL } \Ba{ \Rockafellian\np{\uncertain,\primal} 
          \UppPlus \bp{ -\Coupling{\primal}{\dual} } } \)
    \\ \hline
 property  && & &
          \( -\PartialFunction{\Lagrangian}{\uncertain}
          = \SFM{\bp{\PartialFunction{\Rockafellian}{\uncertain}}}{\coupling} \)
    \\ \hline
 property  && & &
          \( -\PartialFunction{\Lagrangian}{\uncertain} \) is \( \coupling'
                  \)-convex 
    \\ \hline\hline
    univariate 
    && perturbation function & & dual function
    \\
functions  && \( \varphi : \PRIMAL \to \barRR \) & & \( \psi : \DUAL \to \barRR \)
          \\ \hline
    definition 
    && \(
      \varphi\np{\primal}=\inf_{\uncertain\in\UNCERTAIN}\Rockafellian\np{\uncertain,\primal}
      \) & &
          \(
      \psi\np{\dual}=\inf_{\uncertain\in\UNCERTAIN}\Lagrangian\np{\uncertain,\dual}
             \)
    \\ \hline
 property 
    && & & \( -\psi = \SFM{\varphi}{\coupling} \)
              \\
    \hline\hline
  \end{tabular}
  \caption{From Rockafellians to Lagrangians: general coupling}
   \label{tab:From_Rockafellians_to_Lagrangians:_general_coupling}
\end{table}

\begin{proposition}[from \cite{Balder:1977,Martinez-Legaz:2005}]
  \label{pr:From_Rockafellians_to_Lagrangians:_general_coupling}
We consider a set~$\UNCERTAIN$, and two sets $\PRIMAL$ and $\DUAL$ paired by a coupling 
\( \coupling : \PRIMAL \times \DUAL \to \barRR \).
We consider a Rockafellian
\( \Rockafellian : \UNCERTAIN\times\PRIMAL \to\barRR\) 
and we define the \emph{Lagrangian}
\( \Lagrangian : \UNCERTAIN\times\DUAL \to \barRR \)
by\footnote{%
Our definition is close to \cite[p.~252]{Martinez-Legaz:2005}.}
\begin{equation}
  \Lagrangian\np{\uncertain,\dual} = 
\inf_{ \primal \in \PRIMAL } \Ba{ \Rockafellian\np{\uncertain,\primal} 
  \UppPlus \bp{ -\Coupling{\primal}{\dual} } }
\eqsepv \forall \np{\uncertain,\dual} \in \UNCERTAIN\times\DUAL
\eqfinv
\label{eq:From_Rockafellians_to_Lagrangians:_general_coupling}
\end{equation}
the \emph{perturbation function} \( \varphi : \PRIMAL \to
                  \barRR \) and the \emph{dual function}                 
\( \psi : \DUAL \to \barRR \) by 
\begin{equation}
  \varphi\np{\primal} =
  \inf_{\uncertain \in \UNCERTAIN}\Rockafellian\np{\uncertain,\primal}
  \eqsepv  \forall  \primal \in \PRIMAL
  \mtext{ and }
  \psi\np{\dual}=\inf_{\uncertain\in\UNCERTAIN}\Lagrangian\np{\uncertain,\dual} 
  \eqsepv  \forall \dual \in \DUAL
  \eqfinp
  \label{eq:value_function_dual_function}
\end{equation}
Then, we have that
 \( -\PartialFunction{\Lagrangian}{\uncertain}
          = \SFM{\bp{\PartialFunction{\Rockafellian}{\uncertain}}}{\coupling} \) and
          \( -\PartialFunction{\Lagrangian}{\uncertain} \) is \( \coupling'
          \)-convex, for all \( \uncertain\in\UNCERTAIN\), 
          and that \( -\psi = \SFM{\varphi}{\coupling} \).
        \end{proposition}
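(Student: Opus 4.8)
The plan is purely computational: everything follows from the definition~\eqref{eq:Fenchel-Moreau_conjugate} of the $\coupling$-Fenchel-Moreau conjugate, together with two elementary properties of the Moreau additions, namely the duality $-\bp{a \UppPlus b} = \bp{-a} \LowPlus \bp{-b}$ valid for all $a,b \in \barRR$, and the fact that $\sup_{i\in I}\bp{c \LowPlus g_i} = c \LowPlus \sup_{i\in I} g_i$ for any $c\in\barRR$ and any family $(g_i)_{i\in I}$ in $\barRR$.

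I would first establish $-\PartialFunction{\Lagrangian}{\uncertain} = \SFM{\bp{\PartialFunction{\Rockafellian}{\uncertain}}}{\coupling}$, for a fixed $\uncertain\in\UNCERTAIN$. Negating the defining formula~\eqref{eq:From_Rockafellians_to_Lagrangians:_general_coupling} turns the infimum over $\primal$ into a supremum; applying $-\bp{a\UppPlus b} = \bp{-a}\LowPlus\bp{-b}$ with $a = \Rockafellian\np{\uncertain,\primal}$ and $b = -\Coupling{\primal}{\dual}$, and using the commutativity of $\LowPlus$, the inner expression becomes $\Coupling{\primal}{\dual}\LowPlus\bp{-\Rockafellian\np{\uncertain,\primal}}$. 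Hence $-\Lagrangian\np{\uncertain,\dual} = \sup_{\primal\in\PRIMAL}\bp{\Coupling{\primal}{\dual}\LowPlus\bp{-\PartialFunction{\Rockafellian}{\uncertain}\np{\primal}}}$, which is precisely $\SFM{\bp{\PartialFunction{\Rockafellian}{\uncertain}}}{\coupling}\np{\dual}$ by~\eqref{eq:Fenchel-Moreau_conjugate}. The $\coupling'$-convexity of $-\PartialFunction{\Lagrangian}{\uncertain}$ is then immediate: it has just been exhibited as the $\coupling$-Fenchel-Moreau conjugate of a function on~$\PRIMAL$, and by the characterization recalled in~\S\ref{The_general_coupling_case} such conjugates are exactly the $\coupling'$-convex functions on~$\DUAL$.

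It remains to prove $-\psi = \SFM{\varphi}{\coupling}$. Negating the definition of $\psi$ in~\eqref{eq:value_function_dual_function} gives $-\psi\np{\dual} = \sup_{\uncertain\in\UNCERTAIN}\bp{-\Lagrangian\np{\uncertain,\dual}}$; substituting the expression for $-\Lagrangian\np{\uncertain,\dual}$ just obtained and exchanging the two suprema, $-\psi\np{\dual} = \sup_{\primal\in\PRIMAL}\sup_{\uncertain\in\UNCERTAIN}\bp{\Coupling{\primal}{\dual}\LowPlus\bp{-\Rockafellian\np{\uncertain,\primal}}}$. Since $\Coupling{\primal}{\dual}$ does not depend on~$\uncertain$, the second identity on Moreau additions lets me pull it out of the inner supremum, and $\sup_{\uncertain\in\UNCERTAIN}\bp{-\Rockafellian\np{\uncertain,\primal}} = -\varphi\np{\primal}$ by the definition of~$\varphi$ in~\eqref{eq:value_function_dual_function}. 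Therefore $-\psi\np{\dual} = \sup_{\primal\in\PRIMAL}\bp{\Coupling{\primal}{\dual}\LowPlus\bp{-\varphi\np{\primal}}} = \SFM{\varphi}{\coupling}\np{\dual}$ by~\eqref{eq:Fenchel-Moreau_conjugate}.

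The argument is entirely formal, so I do not expect a genuine obstacle. The only point demanding a little care is the bookkeeping with the two Moreau additions: lower and upper additions differ precisely on the indeterminate sum $(+\infty)+(-\infty)$, so one should verify the identity $\sup_{i\in I}\bp{c\LowPlus g_i} = c\LowPlus\sup_{i\in I} g_i$ by a short case distinction according to whether $c$ equals $-\infty$, $+\infty$, or is finite (the cases $c=\pm\infty$ being the ones that genuinely invoke the definition of $\LowPlus$), and likewise check $-\bp{a\UppPlus b}=\bp{-a}\LowPlus\bp{-b}$ on the analogous cases.
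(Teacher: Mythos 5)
Your proposal is correct and follows essentially the same route as the paper: unfold the definition of the conjugate via the Moreau-addition identities to get $-\PartialFunction{\Lagrangian}{\uncertain}=\SFM{\bp{\PartialFunction{\Rockafellian}{\uncertain}}}{\coupling}$, and use the fact that a conjugacy turns the infimum over $\uncertain$ into a supremum to get $-\psi=\SFM{\varphi}{\coupling}$. The only cosmetic difference is that you obtain $\coupling'$-convexity from the characterization of $\coupling'$-convex functions as exactly the $\coupling$-conjugates (recalled in \S\ref{The_general_coupling_case}), whereas the paper derives it from the triple-conjugacy identity $\SFM{\bp{\PartialFunction{\Rockafellian}{\uncertain}}}{\coupling\coupling'\coupling}=\SFM{\bp{\PartialFunction{\Rockafellian}{\uncertain}}}{\coupling}$; these are equivalent facts.
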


        \begin{proof}
The equality \( -\PartialFunction{\Lagrangian}{\uncertain}
          = \SFM{\bp{\PartialFunction{\Rockafellian}{\uncertain}}}{\coupling}\),
 for all \( \uncertain\in\UNCERTAIN\), 
is a straightforward application of
definition~\eqref{eq:Fenchel-Moreau_conjugate}
with \( \fonctionprimal = \PartialFunction{\Rockafellian}{\uncertain} \)
and of
\( -\Lagrangian\np{\uncertain,\dual} = 
\sup_{ \primal \in \PRIMAL } \Ba{ \Coupling{\primal}{\dual} 
  \LowPlus \bp{ -\Rockafellian\np{\uncertain,\primal} } } \),
for all \( \np{\uncertain,\dual} \in \UNCERTAIN\times\DUAL \). 

By property of conjugacies \cite[Proposition~6.1 (ii)]{Martinez-Legaz:2005},
we have that
\( \SFM{\bp{-\PartialFunction{\Lagrangian}{\uncertain}}}{\coupling'\coupling}
= \SFM{\bp{\PartialFunction{\Rockafellian}{\uncertain}}}{\coupling\coupling'\coupling}
=
\SFM{\bp{\PartialFunction{\Rockafellian}{\uncertain}}}{\coupling}
= -\PartialFunction{\Lagrangian}{\uncertain}
\),
for all \( \uncertain\in\UNCERTAIN\).
Hence, \( -\PartialFunction{\Lagrangian}{\uncertain} \) is \( \coupling'
\)-convex, for all \( \uncertain\in\UNCERTAIN\).

Finally, as conjugacies turn an infimum into a supremum, we have that
\( \SFM{\varphi}{\coupling} =
 \SFM{ \bp{ \inf_{\uncertain\in\UNCERTAIN}\Rockafellian\np{\uncertain,\cdot}}}{\coupling} =
 \sup_{\uncertain\in\UNCERTAIN} \SFM{ \bp{\Rockafellian\np{\uncertain,\cdot}}}{\coupling} =
 \sup_{\uncertain\in\UNCERTAIN} \bp{-\PartialFunction{\Lagrangian}{\uncertain}}=
 -  \inf_{\uncertain\in\UNCERTAIN} \PartialFunction{\Lagrangian}{\uncertain}=
 -\psi \) by~\eqref{eq:value_function_dual_function}.
         \end{proof}

         The dual problem is \( \SFMbi{\varphi}{\coupling}\np{\primal} =
         \SFMr{\np{-\psi}}{\coupling}\np{\primal} =
    \sup_{ \dual \in \DUAL } \bp{ \coupling\np{\primal,\dual} 
      \LowPlus \psi\np{\dual} } \).
    In the Fenchel bilinear pairing case, and in \cite{Rockafellar:1974},
    we have that \( \primal=0 \) and \( \coupling\np{0,\dual} =
    \FenchelCoupling{0}{\dual} =0\), \( \forall \dual \in \DUAL \).
    This is generally no longer the case in the general coupling case.

\subsection{From Lagrangians to Rockafellians: general coupling}
\label{From_Lagrangians_to_Rockafellians:_general_coupling}

Taking inspiration from
\S\ref{From_Lagrangians_to_Rockafellians:_bilinear_pairing}, 
we show one can build a Rockafellian from a Lagrangian
as sketched in
Table~\ref{tab:From_Lagrangians_to_Rockafellians:_general_coupling}.
The formal statement is given in
Proposition~\ref{pr:From_Lagrangians_to_Rockafellians:_general_coupling},
wich seems to be new. 

\begin{table}[hbtp]
  \centering
  \begin{tabular}{||c||c|c|c|c||}  
    \hline\hline 
sets & optimization 
    & primal 
      & coupling 
    & dual 
    \\
  &  set~$\UNCERTAIN $ & set $\PRIMAL$ &
 \( \PRIMAL \overset{\coupling}{\leftrightarrow} \DUAL \)  & set~$\DUAL$
    \\ \hline
    variables &   decision  & perturbation &
                                                       \(
                                             \Coupling{\primal}{\dual} \)  & sensitivity
    \\
 &   \( \uncertain\in\UNCERTAIN \) & \( \primal\in\PRIMAL \) & \(  \in\barRR \) &
                                                                                 \(
                                                                                 \dual\in\DUAL \)
     \\ \hline\hline
    bivariate && Rockafellian & & Lagrangian
    \\
    functions  && \( \Rockafellian : \UNCERTAIN\times\PRIMAL \to \barRR \) &
    & \( \Lagrangian : \UNCERTAIN\times\DUAL \to \barRR \)
    \\ \hline
    definition && \( \Rockafellian\np{\uncertain,\primal} = \) & & 
    \\
    && \( 
\sup_{  \dual \in \DUAL } \ba{ \Lagrangian\np{\uncertain,\dual} 
          \LowPlus \Coupling{\primal}{\dual} } \) & & 
    \\ \hline
 property  && \( \PartialFunction{\Rockafellian}{\uncertain}
          = \SFM{\bp{-\PartialFunction{\Lagrangian}{\uncertain}}}{\coupling'} \)
    & &
          
    \\ \hline
 property  && 
          \( \PartialFunction{\Rockafellian}{\uncertain} \) is \( \coupling
                  \)-convex & &
    \\ \hline\hline
    univariate 
    && perturbation function & & dual function
    \\
functions  && \( \varphi : \PRIMAL \to \barRR \) & & \( \psi : \DUAL \to \barRR \)
          \\ \hline
    definition 
    && \(
      \varphi\np{\primal}=\inf_{\uncertain\in\UNCERTAIN}\Rockafellian\np{\uncertain,\primal}
      \) & &
          \(
      \psi\np{\dual}=\inf_{\uncertain\in\UNCERTAIN}\Lagrangian\np{\uncertain,\dual}
             \)
    \\ \hline
 property 
    && \( \varphi \geq \SFMr{\np{-\psi}}{\coupling} \) & & 
             \\
    \hline\hline
  \end{tabular}
  \caption{From Lagrangians to Rockafellians: general coupling}
  \label{tab:From_Lagrangians_to_Rockafellians:_general_coupling}
\end{table}

Contrarily to
Proposition~\ref{pr:From_Rockafellians_to_Lagrangians:_general_coupling} ---
where we obtained the equality \( -\psi = \SFM{\varphi}{\coupling} \)
between dual and perturbation functions --- 
we now only obtain the inequality \( \varphi \geq \SFMr{-\psi}{\coupling} \).
This is because the definition~\eqref{eq:From_Lagrangians_to_Rockafellians:_general_coupling}
of a Rockafellian built from a Lagrangian
involves a supremum operation, which does not behave, with a conjugacy,
as nicely as an infimum operation does. 

\begin{proposition}
    \label{pr:From_Lagrangians_to_Rockafellians:_general_coupling}
We consider a set~$\UNCERTAIN$, and two sets $\PRIMAL$ and $\DUAL$ paired by a coupling 
\( \coupling : \PRIMAL \times \DUAL \to \barRR \).
We consider a Lagrangian
\( \Lagrangian : \UNCERTAIN\times\DUAL \to \barRR \)
and we define the \emph{Rockafellian}
\( \Rockafellian : \UNCERTAIN\times\PRIMAL \to\barRR\) 
by
\begin{equation}
\Rockafellian\np{\uncertain,\primal} = 
\sup_{  \dual \in \DUAL } \ba{ \Lagrangian\np{\uncertain,\dual} 
  \LowPlus \Coupling{\primal}{\dual} }
\eqsepv \forall \np{\uncertain,\primal} \in \UNCERTAIN\times\PRIMAL
\eqfinv
\label{eq:From_Lagrangians_to_Rockafellians:_general_coupling}
\end{equation}
and the \emph{perturbation function} \( \varphi : \PRIMAL \to
                  \barRR \) and the \emph{dual function}                 
                  \( \psi : \DUAL \to \barRR \) as in~\eqref{eq:value_function_dual_function}.

                  Then, we have that
\( \PartialFunction{\Rockafellian}{\uncertain}
          = \SFM{\bp{-\PartialFunction{\Lagrangian}{\uncertain}}}{\coupling'} \)
and
          \( \PartialFunction{\Rockafellian}{\uncertain} \) is \( \coupling
                  \)-convex, for all \( \uncertain\in\UNCERTAIN\), 
          and that \( \varphi \geq \SFMr{-\psi}{\coupling} \).
        \end{proposition}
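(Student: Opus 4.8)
The plan is to imitate the proof of Proposition~\ref{pr:From_Rockafellians_to_Lagrangians:_general_coupling}, exploiting the symmetry of the coupling, while keeping in mind that this time the Rockafellian is built from the Lagrangian through a \emph{supremum} and not an infimum. For the first claim, I would simply unwind definition~\eqref{eq:Fenchel-Moreau_reverse_conjugate} of the $\coupling'$-Fenchel-Moreau conjugate applied to the partial function $\fonctiondual = -\PartialFunction{\Lagrangian}{\uncertain} : \DUAL \to \barRR$: since $-\fonctiondual\np{\dual} = \Lagrangian\np{\uncertain,\dual}$ and the Moreau lower addition is commutative, the supremum in~\eqref{eq:Fenchel-Moreau_reverse_conjugate} is exactly the supremum in~\eqref{eq:From_Lagrangians_to_Rockafellians:_general_coupling}, so that $\PartialFunction{\Rockafellian}{\uncertain} = \SFM{\bp{-\PartialFunction{\Lagrangian}{\uncertain}}}{\coupling'}$ for every $\uncertain\in\UNCERTAIN$. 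The $\coupling$-convexity of $\PartialFunction{\Rockafellian}{\uncertain}$ then follows at once from the characterization recalled just after~\eqref{eq:Fenchel-Moreau_conjugate_all} --- a function on $\PRIMAL$ is $\coupling$-convex precisely when it is the $\coupling'$-Fenchel-Moreau conjugate of some function on $\DUAL$ --- and the first claim exhibits such a representation. (Alternatively one may reproduce the triple-conjugate argument of Proposition~\ref{pr:From_Rockafellians_to_Lagrangians:_general_coupling}, using the property $\SFM{\bp{\SFM{\bp{\SFM{h}{\coupling'}}}{\coupling}}}{\coupling'} = \SFM{h}{\coupling'}$ of conjugacies \cite[Proposition~6.1 (ii)]{Martinez-Legaz:2005}.)

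For the inequality $\varphi \geq \SFMr{\np{-\psi}}{\coupling}$ I would argue pointwise. Fix $\primal\in\PRIMAL$. For every $\uncertain\in\UNCERTAIN$ and every $\dual\in\DUAL$ we have $\Lagrangian\np{\uncertain,\dual} \geq \inf_{\uncertain'\in\UNCERTAIN}\Lagrangian\np{\uncertain',\dual} = \psi\np{\dual}$, hence, since the lower addition is nondecreasing in each argument, $\Lagrangian\np{\uncertain,\dual}\LowPlus\Coupling{\primal}{\dual} \geq \psi\np{\dual}\LowPlus\Coupling{\primal}{\dual}$; taking the supremum over $\dual\in\DUAL$ gives $\Rockafellian\np{\uncertain,\primal} \geq \sup_{\dual\in\DUAL}\bp{\Coupling{\primal}{\dual}\LowPlus\psi\np{\dual}} = \SFMr{\np{-\psi}}{\coupling}\np{\primal}$, the last equality being~\eqref{eq:Fenchel-Moreau_reverse_conjugate} with $\fonctiondual = -\psi$. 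Since this holds for every $\uncertain\in\UNCERTAIN$, taking the infimum over $\uncertain$ yields $\varphi\np{\primal} = \inf_{\uncertain\in\UNCERTAIN}\Rockafellian\np{\uncertain,\primal} \geq \SFMr{\np{-\psi}}{\coupling}\np{\primal}$, as wanted.

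The content is formal and I do not anticipate a genuine obstacle, but the point that deserves attention is exactly why the last step yields only an inequality. The reverse inequality would amount to exchanging $\inf_{\uncertain}$ and $\sup_{\dual}$ in $\inf_{\uncertain}\sup_{\dual}\bp{\Lagrangian\np{\uncertain,\dual}\LowPlus\Coupling{\primal}{\dual}}$, and only the weak max--min direction holds in general; in Proposition~\ref{pr:From_Rockafellians_to_Lagrangians:_general_coupling} the corresponding step was an equality because a conjugacy turns the infimum over $\uncertain$ hidden in the perturbation function into a supremum, whereas here the defining operation of the Rockafellian is already a supremum over $\dual$ and no such identity is available --- which matches the discussion preceding the proposition. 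The only other thing to double-check is the bookkeeping with the Moreau additions: monotonicity of $\LowPlus$ in each argument \cite{Moreau:1963a,Moreau:1963b}, used in the second paragraph, and commutativity of $\LowPlus$ together with the fact that a conjugacy turns an infimum into a supremum, both used when passing between~\eqref{eq:From_Lagrangians_to_Rockafellians:_general_coupling} and~\eqref{eq:Fenchel-Moreau_reverse_conjugate}.
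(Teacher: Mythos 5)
Your proof is correct. For the two claims about \( \PartialFunction{\Rockafellian}{\uncertain} \) you follow essentially the paper's route: the identity \( \PartialFunction{\Rockafellian}{\uncertain} = \SFM{\bp{-\PartialFunction{\Lagrangian}{\uncertain}}}{\coupling'} \) is read off from definition~\eqref{eq:Fenchel-Moreau_reverse_conjugate}, and \( \coupling \)-convexity follows either from the characterization ``\( \coupling \)-convex \( \iff \) a \( \coupling' \)-conjugate of something'' (your main argument) or from the triple-conjugate identity \( \SFM{h}{\coupling'\coupling\coupling'} = \SFM{h}{\coupling'} \) (your parenthetical alternative, which is what the paper actually writes out); these are two phrasings of the same fact.

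For the inequality \( \varphi \geq \SFMr{\np{-\psi}}{\coupling} \) you take a genuinely different and more elementary route. You argue pointwise: \( \Lagrangian\np{\uncertain,\dual} \geq \psi\np{\dual} \), monotonicity of \( \LowPlus \) gives \( \Rockafellian\np{\uncertain,\primal} \geq \sup_{\dual}\bp{\Coupling{\primal}{\dual} \LowPlus \psi\np{\dual}} = \SFMr{\np{-\psi}}{\coupling}\np{\primal} \), and then you take the infimum over \( \uncertain \). The paper instead works at the level of conjugates: it computes \( \SFM{\varphi}{\coupling} = \sup_{\uncertain} \SFM{\bp{-\PartialFunction{\Lagrangian}{\uncertain}}}{\coupling'\coupling} \leq -\psi \) using the biconjugate inequality \( \SFM{h}{\coupling'\coupling} \leq h \), then applies the order-reversing \( \coupling' \)-conjugation to get \( \SFMbi{\varphi}{\coupling} \geq \SFMr{\np{-\psi}}{\coupling} \), and concludes with \( \varphi \geq \SFMbi{\varphi}{\coupling} \). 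Your argument is shorter and needs only monotonicity of \( \LowPlus \) and the weak max--min inequality; the paper's detour has the side benefit of producing the intermediate inequality \( \SFMbi{\varphi}{\coupling} \geq \SFMr{\np{-\psi}}{\coupling} \), which is the statement that the dual problem \( \SFMbi{\varphi}{\coupling} \) dominates \( \SFMr{\np{-\psi}}{\coupling} \) and connects to the discussion following Proposition~\ref{pr:From_Rockafellians_to_Lagrangians:_general_coupling}. Your closing diagnosis of why only an inequality survives (the defining operation here is a supremum over \( \dual \), so one is left with the weak direction of an inf--sup exchange) is accurate and matches the remark preceding the proposition.
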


        \begin{proof}
The equality \( \PartialFunction{\Rockafellian}{\uncertain}
          = \SFM{\bp{-\PartialFunction{\Lagrangian}{\uncertain}}}{\coupling'} \)
 for all \( \uncertain\in\UNCERTAIN\), 
is a straightforward application of
definition~\eqref{eq:Fenchel-Moreau_reverse_conjugate}
with \( \fonctiondual=-\PartialFunction{\Lagrangian}{\uncertain} \)
and \( \PartialFunction{\Rockafellian}{\uncertain} \)
given by~\eqref{eq:From_Lagrangians_to_Rockafellians:_general_coupling}.

By property of conjugacies \cite[Proposition~6.1 (ii)]{Martinez-Legaz:2005}, we have that
\( \SFM{\bp{\PartialFunction{\Rockafellian}{\uncertain}}}{\coupling\coupling'}
= \SFM{\bp{-\PartialFunction{\Lagrangian}{\uncertain}}}{\coupling'\coupling\coupling'}
=
\SFM{\bp{-\PartialFunction{\Lagrangian}{\uncertain}}}{\coupling'}
= \PartialFunction{\Rockafellian}{\uncertain}
\),
for all \( \uncertain\in\UNCERTAIN\).
Hence, \( \PartialFunction{\Rockafellian}{\uncertain} \) is \( \coupling
\)-convex, for all \( \uncertain\in\UNCERTAIN\).

Finally, as conjugacies turn an infimum into a supremum,
for all \( \uncertain\in\UNCERTAIN\) we have that
\( \SFM{\varphi}{\coupling} =
 \SFM{ \bp{ \inf_{\uncertain\in\UNCERTAIN}\Rockafellian\np{\uncertain,\cdot}}}{\coupling} =
 \sup_{\uncertain\in\UNCERTAIN} \SFM{ \bp{\Rockafellian\np{\uncertain,\cdot}}}{\coupling} =
 \sup_{\uncertain\in\UNCERTAIN} \SFM{ \bp{-\PartialFunction{\Lagrangian}{\uncertain}}}{\coupling\coupling'}
 \leq
 \sup_{\uncertain\in\UNCERTAIN} \bp{-\PartialFunction{\Lagrangian}{\uncertain}}
 =-\inf_{\uncertain\in\UNCERTAIN} \PartialFunction{\Lagrangian}{\uncertain}=
 -\psi \) by~\eqref{eq:value_function_dual_function},
 and where we have used that \( \SFM{
   \bp{-\PartialFunction{\Lagrangian}{\uncertain}}}{\coupling\coupling'}
 \leq -\PartialFunction{\Lagrangian}{\uncertain} \).
 Then, we get that \( \SFMbi{\varphi}{\coupling} \geq
 \SFMr{\np{-\psi}}{\coupling} \),
 hence that \( \varphi \geq \SFMbi{\varphi}{\coupling} \geq
 \SFMr{\np{-\psi}}{\coupling} \).
         \end{proof}

         \subsection{Duality between Lagrangians and Rockafellians}
\label{Duality_between_Lagrangians_and_Rockafellians}

         To formally express duality between Lagrangians and Rockafellians,
         we take inspiration from \cite[Sect.~8]{Moreau:1966-1967}, especially
         the notions of \emph{fonctions sur-duales} and of
         \emph{fonctions duales} (dual functions), that is,
         minimal elements in a generalized Fenchel-Young type inequality
         (see also \cite[p.~104-105]{Rockafellar:1970}). 
         The following definition and theorem are new. 

         \begin{definition}
We consider a set~$\UNCERTAIN$, and two sets $\PRIMAL$ and $\DUAL$ paired by a coupling 
\( \coupling : \PRIMAL \times \DUAL \to \barRR \).
We equip \( \barRR^{\UNCERTAIN\times\PRIMAL} \times \barRR^{\UNCERTAIN\times\DUAL} \) with the natural ordering.
We say that the functions
\( \Lagrangian : \UNCERTAIN\times\DUAL \to \barRR \)
and \( \Rockafellian : \UNCERTAIN\times\PRIMAL \to \barRR \)
form a \emph{Lagrangian-Rockafellian} couple
\( \np{\Lagrangian,\Rockafellian} \) if
\( \np{-\Lagrangian,\Rockafellian} \)
is minimal in the inequality
\begin{equation}
\bp{ -\Lagrangian\np{\uncertain,\dual} } \UppPlus 
     \Rockafellian\np{\uncertain,\primal} \geq 
     \Coupling{\primal}{\dual}
\eqsepv \forall \np{\uncertain,\primal,\dual} \in \UNCERTAIN\times\PRIMAL\times\DUAL 
   \eqfinp 
\end{equation}
\label{de:Lagrangian-Rockafellian_couple}
         \end{definition}

         We can now state our main result, which establishes a duality between
         Lagrangians and Rockafellians,
         expressed in different equivalent forms:
         as minimal elements in a generalized Fenchel-Young type inequality;
by means of Fenchel-Moreau conjugates dual pairs; and also by means of 
generalized convex functions (equal to their Fenchel-Moreau biconjugate).

         \begin{theorem}
We consider a set~$\UNCERTAIN$, two sets $\PRIMAL$ and $\DUAL$ paired by a coupling 
\( \coupling : \PRIMAL \times \DUAL \to \barRR \),
and two functions
\( \Lagrangian : \UNCERTAIN\times\DUAL \to \barRR \)
and \( \Rockafellian : \UNCERTAIN\times\PRIMAL \to \barRR \).
Then, the following are equivalent.
\begin{enumerate}
\item
\label{it:Lagrangian-Rockafellian_couple}
  The functions
\( \Lagrangian \) and \( \Rockafellian \) form a Lagrangian-Rockafellian
couple \( \np{\Lagrangian,\Rockafellian} \).
\item
\label{it:Lagrangian-Rockafellian_infsup}
  Equality~\eqref{eq:From_Rockafellians_to_Lagrangians:_general_coupling}
  and Equality~\eqref{eq:From_Lagrangians_to_Rockafellians:_general_coupling}
  hold true, that is,
  \begin{subequations}
    \begin{align}
      \Lagrangian\np{\uncertain,\dual}
      &= 
\inf_{ \primal \in \PRIMAL } \Ba{ \Rockafellian\np{\uncertain,\primal} 
  \UppPlus \bp{ -\Coupling{\primal}{\dual} } }
\eqsepv \forall \np{\uncertain,\dual} \in \UNCERTAIN\times\DUAL
        \eqfinv
        \\
      \Rockafellian\np{\uncertain,\primal}
      &= 
\sup_{  \dual \in \DUAL } \ba{ \Lagrangian\np{\uncertain,\dual} 
  \LowPlus \Coupling{\primal}{\dual} }
\eqsepv \forall \np{\uncertain,\primal} \in \UNCERTAIN\times\PRIMAL
\eqfinp
    \end{align}
\label{eq:Lagrangian-Rockafellian_infsup}    
  \end{subequations}
\item
  \label{it:Lagrangian-Rockafellian_dual_functions}
  The functions 
  \( -\Lagrangian \) and \( \Rockafellian \) are dual functions,
  with respect to the coupling~$\coupling$, in the sense
that 
  \begin{equation}
-\PartialFunction{\Lagrangian}{\uncertain}
          = \SFM{\bp{\PartialFunction{\Rockafellian}{\uncertain}}}{\coupling} 
          \mtext{ and }
          \PartialFunction{\Rockafellian}{\uncertain}
          = \SFM{\bp{-\PartialFunction{\Lagrangian}{\uncertain}}}{\coupling'} 
          \eqsepv \forall \uncertain\in\UNCERTAIN
          \eqfinp
  \label{eq:Lagrangian-Rockafellian_dual_functions}          
        \end{equation}
      \item
          \label{it:Lagrangian-Rockafellian_Rockafellian_convex}
        We have that
        \begin{equation}
-\PartialFunction{\Lagrangian}{\uncertain}
          = \SFM{\bp{\PartialFunction{\Rockafellian}{\uncertain}}}{\coupling} 
          \mtext{ and }
          \SFMbi{\bp{\PartialFunction{\Rockafellian}{\uncertain}}}{\coupling} 
          =\PartialFunction{\Rockafellian}{\uncertain}
          \eqsepv \forall \uncertain\in\UNCERTAIN
          \eqfinp           
          \label{eq:Lagrangian-Rockafellian_Rockafellian_convex}
        \end{equation}
      \item
                  \label{it:Lagrangian-Rockafellian_Lagrangian_convex}
       We have that
        \begin{equation}
          \PartialFunction{\Rockafellian}{\uncertain}
          = \SFM{\bp{-\PartialFunction{\Lagrangian}{\uncertain}}}{\coupling'} 
          \mtext{ and }
          \SFM{\bp{-\PartialFunction{\Lagrangian}{\uncertain}}}{\coupling'\coupling} 
          =-\PartialFunction{\Lagrangian}{\uncertain}
          \eqsepv \forall \uncertain\in\UNCERTAIN
          \eqfinp           
          \label{eq:Lagrangian-Rockafellian_Lagrangian_convex}
          \end{equation}
        \end{enumerate}
          \label{th:Lagrangian-Rockafellian_Lagrangian}           
         \end{theorem}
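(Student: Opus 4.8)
The plan is to prove the equivalence by a cycle of implications that exploits the two \enquote{building} propositions already established. Concretely, I would show \ref{it:Lagrangian-Rockafellian_couple} $\Leftrightarrow$ \ref{it:Lagrangian-Rockafellian_infsup}, then \ref{it:Lagrangian-Rockafellian_infsup} $\Leftrightarrow$ \ref{it:Lagrangian-Rockafellian_dual_functions}, and finally \ref{it:Lagrangian-Rockafellian_dual_functions} $\Leftrightarrow$ \ref{it:Lagrangian-Rockafellian_Rockafellian_convex} and \ref{it:Lagrangian-Rockafellian_dual_functions} $\Leftrightarrow$ \ref{it:Lagrangian-Rockafellian_Lagrangian_convex}. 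Everything is \enquote{fiberwise} in $\uncertain$: once $\uncertain$ is frozen, $\PartialFunction{\Lagrangian}{\uncertain}$ and $\PartialFunction{\Rockafellian}{\uncertain}$ are univariate functions on $\DUAL$ and $\PRIMAL$ respectively, and the two equalities in \eqref{eq:Lagrangian-Rockafellian_infsup} read exactly $-\PartialFunction{\Lagrangian}{\uncertain} = \SFM{\bp{\PartialFunction{\Rockafellian}{\uncertain}}}{\coupling}$ and $\PartialFunction{\Rockafellian}{\uncertain} = \SFM{\bp{-\PartialFunction{\Lagrangian}{\uncertain}}}{\coupling'}$ after unfolding \eqref{eq:Fenchel-Moreau_conjugate} and \eqref{eq:Fenchel-Moreau_reverse_conjugate} (using that $-\inf(a \UppPlus (-b)) = \sup((-a)\LowPlus b)$ and the symmetry of the coupling via $\coupling'$). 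So \ref{it:Lagrangian-Rockafellian_infsup} $\Leftrightarrow$ \ref{it:Lagrangian-Rockafellian_dual_functions} is immediate once the lower/upper addition bookkeeping is done carefully.

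For \ref{it:Lagrangian-Rockafellian_dual_functions} $\Leftrightarrow$ \ref{it:Lagrangian-Rockafellian_Rockafellian_convex}: if the two conjugacy equalities hold, then applying $\SFM{\cdot}{\coupling}$ to $\PartialFunction{\Rockafellian}{\uncertain} = \SFM{\bp{-\PartialFunction{\Lagrangian}{\uncertain}}}{\coupling'}$ and using $-\PartialFunction{\Lagrangian}{\uncertain} = \SFM{\bp{\PartialFunction{\Rockafellian}{\uncertain}}}{\coupling}$ gives $\SFMbi{\bp{\PartialFunction{\Rockafellian}{\uncertain}}}{\coupling} = \SFM{\bp{-\PartialFunction{\Lagrangian}{\uncertain}}}{\coupling'} = \PartialFunction{\Rockafellian}{\uncertain}$, which is \ref{it:Lagrangian-Rockafellian_Rockafellian_convex}; conversely, if $-\PartialFunction{\Lagrangian}{\uncertain} = \SFM{\bp{\PartialFunction{\Rockafellian}{\uncertain}}}{\coupling}$ and $\PartialFunction{\Rockafellian}{\uncertain}$ is $\coupling$-convex, then $\PartialFunction{\Rockafellian}{\uncertain} = \SFMbi{\bp{\PartialFunction{\Rockafellian}{\uncertain}}}{\coupling} = \SFM{\bp{\SFM{\bp{\PartialFunction{\Rockafellian}{\uncertain}}}{\coupling}}}{\coupling'} = \SFM{\bp{-\PartialFunction{\Lagrangian}{\uncertain}}}{\coupling'}$, recovering \ref{it:Lagrangian-Rockafellian_dual_functions}. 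The equivalence \ref{it:Lagrangian-Rockafellian_dual_functions} $\Leftrightarrow$ \ref{it:Lagrangian-Rockafellian_Lagrangian_convex} is the mirror image, applying $\SFM{\cdot}{\coupling'}$ and using $\coupling'$-convexity of $-\PartialFunction{\Lagrangian}{\uncertain}$; here I would invoke the characterization of $\coupling$- and $\coupling'$-convexity stated just after \eqref{eq:Fenchel-Moreau_conjugate_all} (a function is $\coupling$-convex iff it is a $\coupling'$-conjugate, equivalently equals its biconjugate), together with the triple-conjugate identity $\SFM{g}{\coupling\coupling'\coupling} = \SFM{g}{\coupling}$ from \cite[Proposition~6.1~(ii)]{Martinez-Legaz:2005}, which is already used in the proofs of Propositions~\ref{pr:From_Rockafellians_to_Lagrangians:_general_coupling} and~\ref{pr:From_Lagrangians_to_Rockafellians:_general_coupling}.

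The genuinely substantive step is \ref{it:Lagrangian-Rockafellian_couple} $\Leftrightarrow$ \ref{it:Lagrangian-Rockafellian_infsup}, i.e.\ identifying the minimal pairs in the generalized Fenchel--Young inequality $\bp{-\Lagrangian\np{\uncertain,\dual}} \UppPlus \Rockafellian\np{\uncertain,\primal} \geq \Coupling{\primal}{\dual}$. I would argue, again fiberwise in $\uncertain$, that the inequality for fixed $\uncertain$ is equivalent to the two conjugate inequalities $-\PartialFunction{\Lagrangian}{\uncertain} \geq \SFM{\bp{\PartialFunction{\Rockafellian}{\uncertain}}}{\coupling}$ and $\PartialFunction{\Rockafellian}{\uncertain} \geq \SFM{\bp{-\PartialFunction{\Lagrangian}{\uncertain}}}{\coupling'}$ (rewrite $\Coupling{\primal}{\dual} - \Rockafellian\np{\uncertain,\primal} \leq -\Lagrangian\np{\uncertain,\dual}$ and take the sup over $\primal$, respectively over $\dual$), being careful with the $\LowPlus/\UppPlus$ conventions so that \enquote{moving a term across} is licit in $\barRR$. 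Minimality of $\np{-\Lagrangian,\Rockafellian}$ in the product order then means: one cannot decrease $-\Lagrangian\np{\uncertain,\dual}$ at any point without violating the inequality, and likewise for $\Rockafellian$. Decreasing $-\PartialFunction{\Lagrangian}{\uncertain}$ somewhere keeps the inequality valid exactly as long as $-\PartialFunction{\Lagrangian}{\uncertain} \geq \SFM{\bp{\PartialFunction{\Rockafellian}{\uncertain}}}{\coupling}$ still holds, so the $-\Lagrangian$-component is minimal iff $-\PartialFunction{\Lagrangian}{\uncertain} = \SFM{\bp{\PartialFunction{\Rockafellian}{\uncertain}}}{\coupling}$; symmetrically the $\Rockafellian$-component is minimal iff $\PartialFunction{\Rockafellian}{\uncertain} = \SFM{\bp{-\PartialFunction{\Lagrangian}{\uncertain}}}{\coupling'}$. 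But note these two minimality conditions are coupled—lowering $-\Lagrangian$ could in principle later permit lowering $\Rockafellian$—so the subtlety is to show that joint minimality of the pair is equivalent to the conjunction of the two pointwise equalities, not merely to each one separately; this is where I would have to check that the two conjugacy operations are \enquote{compatible} in the sense that if $-\PartialFunction{\Lagrangian}{\uncertain} = \SFM{\bp{\PartialFunction{\Rockafellian}{\uncertain}}}{\coupling}$ then automatically $\SFM{\bp{-\PartialFunction{\Lagrangian}{\uncertain}}}{\coupling'} = \SFMbi{\bp{\PartialFunction{\Rockafellian}{\uncertain}}}{\coupling} \leq \PartialFunction{\Rockafellian}{\uncertain}$, so that the second equality is the only remaining obstruction and the order of reductions does not matter. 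I expect this interplay—confirming that the componentwise-minimal pair is exactly the simultaneously-conjugate pair, via the biconjugate inequality and the triple-conjugate identity—to be the main point requiring care, with the rest being a bookkeeping exercise in Moreau's two additions.
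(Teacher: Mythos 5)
Your proposal is correct and follows essentially the same route as the paper: the same cycle of equivalences (Item~1 $\iff$ Item~2 via Moreau's lower/upper addition rules and the minimality characterization, Item~2 $\iff$ Item~3 by unfolding the conjugacy definitions fiberwise in $\uncertain$, and Item~3 $\iff$ Items~4, 5 by applying the conjugations and the biconjugate identities). Your extra care on the joint-versus-componentwise minimality point in Item~1 $\iff$ Item~2 is sound and in fact more explicit than the paper, which treats that step tersely; it resolves exactly as you anticipate, since perturbing one component at a time already forces both conjugate equalities.
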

        The above equivalences are formal, in that they rather easily follow from definitions
         (Definition~\ref{de:Lagrangian-Rockafellian_couple}, Equation~\eqref{eq:Fenchel-Moreau_conjugate_all}
         in~\S\ref{The_general_coupling_case}) and from basic properties of conjugacies.
         They simply establish, in different equivalent ways, the duality between
         Lagrangians and Rockafellians. In particular, 
         in Item~\ref{it:Lagrangian-Rockafellian_couple}, Item~\ref{it:Lagrangian-Rockafellian_infsup}
         and Item~\ref{it:Lagrangian-Rockafellian_dual_functions},
         Lagrangians and Rockafellians are characterized in a balanced fashion, 
         by contrast with Item~\ref{it:Lagrangian-Rockafellian_Rockafellian_convex}
         and Item~\ref{it:Lagrangian-Rockafellian_Lagrangian_convex}.
       \medskip
       
\begin{proof}

  Item~\ref{it:Lagrangian-Rockafellian_couple}
  $\iff$
  Item~\ref{it:Lagrangian-Rockafellian_infsup}.

  The equivalence follows from Definition~\ref{de:Lagrangian-Rockafellian_couple}
  and from 
  the following equivalences
  \cite{Moreau:1963a,Moreau:1963b} (see also
  \cite[Appendix A.1]{Chancelier-DeLara:2019}): 
  for any \( \np{\uncertain,\primal,\dual} \in
  \UNCERTAIN\times\PRIMAL\times\DUAL \),
  \begin{subequations}
    \begin{align}
& \bp{ -\Lagrangian\np{\uncertain,\dual} } \UppPlus 
     \Rockafellian\np{\uncertain,\primal} \geq 
                    \Coupling{\primal}{\dual}
             \eqfinv
      \\
      \iff &
  \Lagrangian\np{\uncertain,\dual} 
             \leq
             \Rockafellian\np{\uncertain,\primal}             
    \UppPlus
\bp{-\Coupling{\primal}{\dual}}
             \eqfinv
      \\
      \iff &
  \Rockafellian\np{\uncertain,\primal}
  \geq
  \Lagrangian\np{\uncertain,\dual}
  \LowPlus
             \Coupling{\primal}{\dual}
             \eqfinp 
    \end{align}
  \end{subequations}

    Item~\ref{it:Lagrangian-Rockafellian_infsup}
  $\iff$
    Item~\ref{it:Lagrangian-Rockafellian_dual_functions}.

    Equations~\eqref{eq:Lagrangian-Rockafellian_infsup} are equivalent to 
    Equations~\eqref{eq:Lagrangian-Rockafellian_dual_functions}
    by definitions~\eqref{eq:Fenchel-Moreau_conjugate}
    and~\eqref{eq:Fenchel-Moreau_reverse_conjugate}. 
\medskip 

    Item~\ref{it:Lagrangian-Rockafellian_dual_functions}
    $\implies$
    Item~\ref{it:Lagrangian-Rockafellian_Rockafellian_convex}.

    Equations~\eqref{eq:Lagrangian-Rockafellian_dual_functions}
    imply that
\( -\PartialFunction{\Lagrangian}{\uncertain}
= \SFM{\bp{\PartialFunction{\Rockafellian}{\uncertain}}}{\coupling} \)
and 
    \( \SFM{\bp{\PartialFunction{\Rockafellian}{\uncertain}}}{\coupling\coupling'}
    =\SFM{\Bp{\SFM{\bp{\PartialFunction{\Rockafellian}{\uncertain}}}{\coupling}}}{\coupling'}
= \SFM{\bp{-\PartialFunction{\Lagrangian}{\uncertain}}}{\coupling'}
= \PartialFunction{\Rockafellian}{\uncertain}
\),
for all \( \uncertain\in\UNCERTAIN\).
Hence, we obtain
Equations~\eqref{eq:Lagrangian-Rockafellian_Rockafellian_convex}.
\medskip 

Item~\ref{it:Lagrangian-Rockafellian_Rockafellian_convex}
$\implies$
Item~\ref{it:Lagrangian-Rockafellian_dual_functions}.

Equations~\eqref{eq:Lagrangian-Rockafellian_Rockafellian_convex}
imply that
\( -\PartialFunction{\Lagrangian}{\uncertain}
= \SFM{\bp{\PartialFunction{\Rockafellian}{\uncertain}}}{\coupling} \)
and 
\( \PartialFunction{\Rockafellian}{\uncertain}
= \SFM{\bp{\PartialFunction{\Rockafellian}{\uncertain}}}{\coupling\coupling'}
=\SFM{\Bp{\SFM{\bp{\PartialFunction{\Rockafellian}{\uncertain}}}{\coupling}}}{\coupling'}
= \SFM{\bp{-\PartialFunction{\Lagrangian}{\uncertain}}}{\coupling'} \),
for all \( \uncertain\in\UNCERTAIN\).
Hence, we obtain
Equations~\eqref{eq:Lagrangian-Rockafellian_dual_functions}. 

\medskip 

    Item~\ref{it:Lagrangian-Rockafellian_dual_functions}
    $\iff$
    Item~\ref{it:Lagrangian-Rockafellian_Lagrangian_convex}.
    
    This equivalence is shown by two implications
    \eqref{eq:Lagrangian-Rockafellian_dual_functions}
    $\implies$
    \eqref{eq:Lagrangian-Rockafellian_Lagrangian_convex}
    and
     \eqref{eq:Lagrangian-Rockafellian_Lagrangian_convex}
     $\implies$
     \eqref{eq:Lagrangian-Rockafellian_dual_functions}
    as above. 
  
\end{proof}

As a consequence, for a Lagrangian-Rockafellian couple
\( \np{\Lagrangian,\Rockafellian} \), one necessarily has that 
\( -\PartialFunction{\Lagrangian}{\uncertain} \) is \( \coupling'\)-convex
and
\( \PartialFunction{\Rockafellian}{\uncertain} \) is \( \coupling\)-convex,
for all \( \uncertain\in\UNCERTAIN\).

\section{Conclusion}

In this paper, we have highlighted a duality between Lagrangians and
Rockafellians, as these two functions appear in the
``perturbation + duality'' method of \cite{Rockafellar:1974}.
We have treated both functions equally, and have provided
formulas that go both way: from Rockafellian to Lagrangian (classical);
from Lagrangian to Rockafellian (less classical).
The setting is the one of so-called abstract or generalized convexity,
  where the perturbation belongs to a primal set paired, by a coupling
  function, with a dual set. It encompasses the classical Fenchel duality
  setting.

  We have proposed a notion of Lagrangian-Rockafellian couple ---
     minimal elements in a generalized Fenchel-Young type inequality ---
and
we have formally expressed duality between Lagrangians and Rockafellians by
means of Fenchel-Moreau conjugates dual pairs, and also by means of 
generalized convex functions (equal to their Fenchel-Moreau biconjugate).
\bigskip

\textbf{Acknowledgements.}
The Author thanks Johannes Royset and Roger Wets for their comments on a first version of
the paper.
The Author thanks the Reviewer for her/his comments that helped improve the paper,
notably by clarifying the contribution and by pointing suitable sources.


\newcommand{\noopsort}[1]{} \ifx\undefined\allcaps\def\allcaps#1{#1}\fi

\end{document}